\newcommand{\B}[2]{\mathbb{B}^{(#1)}_{#2} }
\newcommand{\C}[2]{C^{(#1)}_{#2} }
\newcommand{\st}[2]{\genfrac{\{}{\}}{0pt}{}{#1}{#2}} 
\DeclareMathOperator{\Li}{Li} 
\newtheorem{theorem}{Theorem}[section]
\newtheorem{corollary}[theorem]{Corollary}
\theoremstyle{definition}
\newtheorem{remark}[theorem]{Remark}
\begin{document}

\title{On congruence properties of poly-Bernoulli numbers with negative upper-indices}

\author{Yasuo Ohno and Mika Sakata}

\date{}

\maketitle

\begin{abstract}
For any integer $k$, M.~Kaneko (\cite{K1997}) defined $k$-th poly-Bernoulli numbers as a kind of generalization of classical Bernoulli numbers using $k$-th polylogarithm. 
In case when $k$ is positive, $k$-th poly-Bernoulli numbers is a sequence of rational numbers as same as classical  Bernoulli numbers. 
On the other hand, in case when $k$ is negative, it is a sequence of positive integers, and many combinatoric and number theoretic properties has been investigated. 
In the present paper, the negative case is treated, and their congruence and $p$-adic properties are discussed. 
Beside of them, 
application of the results to obtain a congruence property for the number of lonesum matrices is also mentioned.
\end{abstract}

\section{Introduction}

Poly-Bernoulli numbers $\{\B{k}{n}\}_{n\geq0}$ is a sequence defined, in \cite{K1997}, for any integer $k$ by the generating series 
\begin{align*}
\frac{\Li_{k}(1-e^{-t})}{1-e^{-t}}=\sum^{\infty}_{n=0}\B{k}{n}\frac{t^n}{n!} 
\end{align*}
where $\Li_{k}(t)=\sum^{\infty}_{n=1}\frac{t^{n}}{n^{k}}$. 
In addition, Keneko defined another sequence $\{\C{k}{n}\}_{n\geq0}$ by 
\begin{align*}
\frac{\Li_{k}(1-e^{-t})}{e^{t}-1}=\sum^{\infty}_{n=0}\C{k}{n}\frac{t^n}{n!} 
\end{align*}
in \cite{AKzeta}, it will be called poly-Bernoulli numbers of ``type-$C$" in the current paper. 
It is well-known that $\B{1}{n}$ (resp. $\C{1}{n}$) is the classical Bernoulli numbers with 
$\B{1}{1}=\frac12$ (resp. $\C{1}{1}=-\frac12$). 
Moreover, for any integer $k$, the numbers $\B{k}{n}$ and $\C{k}{n}$ are related with each other as follows:  
\[ \B{k}{n}=\sum_{m=0}^{n}\binom{n}{m}\C{k}{m}, \quad \C{k}{n}=\sum_{m=0}^{n}(-1)^{n-m}\binom{n}{m}\B{k}{m}\quad{\rm and}\quad \B{k}{n}=\C{k}{n}+\C{k-1}{n-1}. \]

An explicit formula for each poly-Bernoulli numbers is given 
in \cite[Theorem 1]{K1997} and \cite[Theorem 1]{K2010}
as follows: 
\begin{align}\label{explicit formula}
\B{k}{n}=(-1)^{n}\sum^{n}_{m=0}\frac{(-1)^{m}m!\st{n}{m}}{(m+1)^{k}}
\end{align}
and
\begin{align}\label{explicit formula2}
\C{k}{n}=(-1)^{n}\sum^{n}_{m=0}\frac{(-1)^{m}m!\st{n+1}{m+1}}{(m+1)^{k}},
\end{align}
where $\st{n}{m}$ denotes the Stirling number of the second kind, defined by
\begin{align*}
\frac{(e^t-1)^m}{m!}=\sum_{n=m}^{\infty}\st{n}{m}\frac{t^n}{n!}
\end{align*}
or equivalently by
\begin{align}\label{stirling explicit}
\st{n}{m} = \frac{(-1)^m}{m!}\sum_{l=0}^{m}(-1)^l \binom{m}{l}l^{n}, 
\end{align}
for any non-negative integers $n$ and $m$.

Studying the divisibility of the denominators of classical Bernoulli numbers by prime numbers is important and derives the Kummer congruence and $p$-adic theory of $L$ functions.
Poly-Bernoulli numbers with positive upper indices are sequence of rational numbers, and their divisibility by prime numbers in the denominators has been studied (\cite{AIK,AK1999,K1997} et.al.). 
Hoffman (\cite{Hoffman}) showed an interesting congruence between poly-Bernoulli numbers with positive upper indices and finite multiple zeta values. It is also known in \cite{AKzeta} that poly-Bernoulli numbers with positive upper indices appear at the negative integer points of the Arakawa--Kaneko multiple zeta functions,  and its congruence property and a kind of periodicity are also studied by the authors in \cite{OSaka1}.

In case when the upper-index is negative, 
poly-Bernoulli numbers are sequence of positive integers. 
These sequences and each term of them are meaningful objects not only in number theory but also in combinatorics, 
and a number of interesting results have been obtained. 
For example, a beautiful relation named ``duality" are well known.
For any non-negative integers $k$ and $n$, both poly-Bernoulli numbers have each duality formula(\cite[Theorem 2]{K1997}, \cite[Corollary of Theorem 1]{K2010}): 
\begin{align}\label{duality formula}
\B{-k}{n}=\B{-n}{k}, 
\end{align}
and 
\begin{align}\label{duality formula2}
\C{-k-1}{n}=\C{-n-1}{k}. 
\end{align}
Moreover a pretty formula
$$
\sum_{l=0}^{n} (-1)^l \B{-l}{n-l}=0
$$
is obtained in $\cite{AK1999}$ for any positive integer $n$. 
More combinatorially interesting properties are known by many papers including 
\cite{AT, Benyi, Brew, KOY, KST, Launois, OS}.

In the present paper, poly-Bernoulli numbers with negative upper-indices are treated, 
and their congruence and $p$-adic properties are investigated. 
Beside of them, 
application of the results to obtain a congruence property for the number of lonesum matrices, via Brewbaker's work, is also mentioned.

Throughout this paper, the characters $p$ and $p_j$ express prime numbers, 
and $M$ and $N$ express positive integers.
Moreover, $\varphi (M)$ denotes the Euler totient function of $M$, 
which gives the number of positive integers relatively prime to $M$ and less than $M$. 
For example, $\varphi(p^N)=p^{N-1}(p-1)$.

\section{Periodicity}
In this section, we present our results on the properties of poly-Bernoulli numbers with negative upper indices modulo $M$. 
 
First, we show that poly-Bernoulli numbers with negative upper indices have a period modulo $p^N$, as follows. 

\begin{theorem}\label{period1}
Let $p$ be a prime number, and $k, N$ be positive integers. 
Suppose that integers $n, m \geq N$ satisfies $n\equiv m \pmod{\varphi(p^N)}$. 
Then we have  
\begin{eqnarray*}
\B{-k}{n}\equiv \B{-k}{m}\pmod{p^{N}}. 
\end{eqnarray*}
\end{theorem}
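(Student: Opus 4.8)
The plan is to argue directly from the explicit formula \eqref{explicit formula}. Replacing $k$ by $-k$ there and inserting the expansion \eqref{stirling explicit} of the Stirling numbers, and using $(-1)^{j}j!\st{n}{j}=\sum_{l=0}^{j}(-1)^{l}\binom{j}{l}l^{n}$, one obtains
\[
(-1)^{n}\B{-k}{n}=\sum_{j=0}^{n}(-1)^{j}j!\st{n}{j}(j+1)^{k}
=\sum_{j=0}^{n}\sum_{l=0}^{j}(-1)^{l}\binom{j}{l}\,l^{n}\,(j+1)^{k}.
\]
Thus the quantity is a double sum all of whose $n$-dependence sits in the factors $(-1)^{n}$ and $l^{n}$, and each surviving term is an integer multiple (by $\binom{j}{l}(j+1)^{k}\in\bb Z$) of a power $l^{n}$.

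First I would make the outer summation ranges for $n$ and $m$ agree. Taking $m\ge n$ without loss of generality and using $\st{n}{j}=0$ for $j>n$, the terms with $j>n$ contribute nothing, so in fact $(-1)^{n}\B{-k}{n}=\sum_{j=0}^{m}\sum_{l=0}^{j}(-1)^{l}\binom{j}{l}l^{n}(j+1)^{k}$, which has exactly the same shape as the corresponding expression for $\B{-k}{m}$. Subtracting term by term, it then suffices to check two things: that $(-1)^{n}=(-1)^{m}$, and that $l^{n}\equiv l^{m}\pmod{p^{N}}$ for all $0\le l\le j\le m$. The first holds because $\varphi(p^{N})=p^{N-1}(p-1)$ is even except when $p=2$ and $N=1$, and in that single exceptional case the modulus $p^{N}=2$ makes the sign irrelevant.

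For the congruence $l^{n}\equiv l^{m}\pmod{p^{N}}$ I would split on divisibility of $l$ by $p$. If $p\mid l$ (in particular if $l=0$), then $p^{N}\mid l^{n}$ and $p^{N}\mid l^{m}$ because $n,m\ge N$, so both sides vanish modulo $p^{N}$. If $p\nmid l$, then $\gcd(l,p^{N})=1$, Euler's theorem gives $l^{\varphi(p^{N})}\equiv 1\pmod{p^{N}}$, and writing $m=n+t\,\varphi(p^{N})$ with $t\ge0$ yields $l^{m}\equiv l^{n}\pmod{p^{N}}$. Combining the two cases, every term of the difference of the two double sums is divisible by $p^{N}$, so $(-1)^{n}\B{-k}{n}\equiv(-1)^{m}\B{-k}{m}\pmod{p^{N}}$, and cancelling the common sign gives $\B{-k}{n}\equiv\B{-k}{m}\pmod{p^{N}}$.

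The argument is essentially routine; the points that need a little care are that the hypothesis $n,m\ge N$ is used precisely to dispose of the terms with $p\mid l$ (for $l$ prime to $p$ one needs only $n\equiv m\pmod{\varphi(p^{N})}$), that one must first enlarge the $j$-summation so the two formulas match in shape, and the small parity remark dealing with $p=2$, $N=1$.
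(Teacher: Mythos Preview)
Your argument is correct, but it takes a different route from the paper's. The paper first invokes the duality \eqref{duality formula} to rewrite $\B{-k}{n}=\B{-n}{k}$, and then applies the explicit formula \eqref{explicit formula} to $\B{-n}{k}$. This puts all the $n$-dependence into the single factor $(l+1)^{n}$ inside a sum $\sum_{l=0}^{k}$ whose range is \emph{fixed} (it runs only up to $k$), so there is no sign $(-1)^{n}$ to track, no Stirling expansion, and no need to reconcile two different summation ranges; Euler's theorem and the bound $n,m\ge N$ finish the job in one line.

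Your approach, by contrast, attacks $\B{-k}{n}$ head-on, which forces you to (i) expand $\st{n}{j}$ via \eqref{stirling explicit} to expose the powers $l^{n}$, (ii) extend the $j$-summation to a common upper limit using $\st{n}{j}=0$ for $j>n$, and (iii) handle the parity of $n$ and $m$ separately, with a small side case for $p=2$, $N=1$. All of this is sound. What your route buys is independence from the duality formula \eqref{duality formula}, which is itself a nontrivial identity; what the paper's route buys is brevity and a single sum. Either way the analytic content is the same Euler--Fermat step together with the observation that $n,m\ge N$ kills the $p$-divisible bases.
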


\begin{proof}
Assuming that $n, m \geq N$, using the explicit formula (\ref{explicit formula}), we have 
\begin{eqnarray*}
\B{-n}{k}
&=& (-1)^{k}\sum^{k}_{l=0}(-1)^{l}l!\st{k}{l}(l+1)^{n}\\
&\equiv & (-1)^{k}\sum^{k}_{\substack{l=0\\p\not \hspace{2pt} |\hspace{2pt}l+1}}(-1)^{l}l!\st{k}{l}(l+1)^{n}
\pmod{p^{N}}. 
\end{eqnarray*}
Euler's totient theorem, namely the natural generalization of Fermat's little theorem, 
confirms 
$(l+1)^n\equiv (l+1)^m$ (mod $p^N$) for any non-negative integers $l$ satisfying $p\not |\hspace{2pt}l+1$, 
and it leads us to obtain 
\begin{eqnarray*}
\B{-n}{k}\equiv \B{-m}{k} \pmod{p^{N}}. 
\end{eqnarray*}
Thus we obtain the claim $\B{-k}{n}\equiv \B{-k}{m}\pmod{p^{N}}$, by applying the duality formula (\ref{duality formula}) to the both sides. 
\end{proof}

\begin{remark}
Theorem \ref{period1} originally obtained by both Kitahara \cite{Kitahara} and Ohno--Sakata \cite{OSaka} almost at the meantime independently, 
however the methods are different with each other, i.e. the proof in \cite{Kitahara} uses $p$-adic analytic distribution.
\end{remark}

By Theorem \ref{period1}, we see that poly-Bernoulli numbers with negative upper-index have a period modulo a positive integer $M$. 
\begin{corollary}\label{BcongM}
Let $k$ and $M$ be positive integers, and $M=p_{1}^{e_{1}}p_{2}^{e_{2}} \cdots p_{l}^{e_{l}}$ 
be the prime factorization of $M$, where $p_{1}, p_{2}, \ldots , p_{l}$ are distinct prime divisors, of $M$. 
For any integers $n, m \geq \max \{e_{j}\;|\;1\leq j \leq l\}$ 
satisfying $n\equiv m\pmod{\varphi (M)}$, we have 
\begin{eqnarray*}
\B{-k}{n}\equiv \B{-k}{m}\pmod{M}.
\end{eqnarray*}
\end{corollary}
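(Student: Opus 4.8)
The plan is to reduce the asserted congruence to the prime-power components of $M$, dispatch each component with Theorem \ref{period1}, and reassemble the pieces via the Chinese Remainder Theorem. First I would fix an index $j$ with $1 \leq j \leq l$. Because Euler's totient function is multiplicative on pairwise coprime factors, we have $\varphi(M) = \prod_{i=1}^{l}\varphi(p_i^{e_i})$, and in particular $\varphi(p_j^{e_j})$ divides $\varphi(M)$. Hence the hypothesis $n \equiv m \pmod{\varphi(M)}$ immediately yields $n \equiv m \pmod{\varphi(p_j^{e_j})}$.

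Next I would note that the standing assumption $n, m \geq \max\{e_i \mid 1 \leq i \leq l\}$ in particular gives $n, m \geq e_j$. Thus all hypotheses of Theorem \ref{period1} are met for the prime $p_j$ with $N = e_j$, and that theorem gives
\[
\B{-k}{n} \equiv \B{-k}{m} \pmod{p_j^{e_j}}.
\]
Running this over all $j \in \{1, \ldots, l\}$ produces one such congruence for each prime-power divisor of $M$.

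Finally, since $p_1^{e_1}, \ldots, p_l^{e_l}$ are pairwise coprime with product $M$, the Chinese Remainder Theorem shows that an integer divisible by every $p_j^{e_j}$ is divisible by $M$; applying this to $\B{-k}{n} - \B{-k}{m}$ gives $\B{-k}{n} \equiv \B{-k}{m} \pmod{M}$, which is the claim.

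I do not expect a genuine obstacle: the corollary is a routine globalization of Theorem \ref{period1}. The only points deserving a moment's attention are the divisibility $\varphi(p_j^{e_j}) \mid \varphi(M)$, which is precisely where multiplicativity of $\varphi$ is used, and the observation that the lower bound imposed on $n$ and $m$ has been chosen exactly so that Theorem \ref{period1} applies uniformly to every prime-power divisor of $M$ at once.
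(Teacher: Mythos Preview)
Your argument is correct and matches the paper's proof essentially step for step: both use multiplicativity of $\varphi$ (the paper calls it ``Euler's product formula'') to deduce $\varphi(p_j^{e_j})\mid \varphi(M)$, invoke Theorem \ref{period1} at each prime power, and then combine via the pairwise coprimality of the $p_j^{e_j}$. The only difference is cosmetic---you name the Chinese Remainder Theorem explicitly and spell out why the lower bound on $n,m$ suffices, whereas the paper leaves these points implicit.
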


\begin{proof}
Under the given conditions, $M=p_{1}^{e_{1}}p_{2}^{e_{2}} \cdots p_{l}^{e_{l}}$ 
and $\varphi (M) | (n-m)$, Euler's product formula leads us to 
$\varphi (p_{j}^{e_{j}}) | (n-m)$ for all $j$ satisfying $1\leq j \leq l$, 
and then by Theorem \ref{period1}, we have 
$p_{j}^{e_{j}} | \left(\B{-k}{n}-\B{-k}{m}\right)$, 
for all such $j$. 
Thus we obtain $M | \left(\B{-k}{n}-\B{-k}{m}\right)$. 
\end{proof}

Now, we briefly touch on the combinatorial aspect. 
A matrix whose entries are 0 or 1 and is uniquely determined by its row and column sum vectors
is called ``lonesum". For any integers $k$ and $n$, 
we denote the number of lonesum matrices of size $k\times n$ by $L(k, n)$. 
Corollary \ref{BcongM} together with Brewbaker's relation: 
\begin{align}\label{Brew}
L(k,m)=\B{-k}{m}
\end{align}
(\cite[Theorem 2]{Brew}) immediately derive the period of the number of lonesum matrices, as follows. 

\begin{theorem}
Let $k$ and $M$ be positive integers, and $M=p_{1}^{e_{1}}p_{2}^{e_{2}} \cdots p_{l}^{e_{l}}$ 
be the prime factorization of $M$, where $p_{1}, p_{2}, \ldots , p_{l}$ are distinct prime divisors, of $M$. 
For any integers $n, m \geq \max \{e_{j}\;|\;1\leq j \leq l\}$ 
satisfying $n\equiv m\pmod{\varphi (M)}$, we have 
\[ L(k, n)\equiv L(k, m)\pmod{M}. \]
\end{theorem}

Similar to Theorem \ref{period1}, we get the period for poly-Bernoulli numbers of type-$C$. 
\begin{theorem}\label{period1'}
Let $p$ be a prime number, and $k, N$ be positive integers. 
Suppose that integers $n, m \geq N$ satisfies $n\equiv m \pmod{\varphi(p^N)}$. 
Then we have  
\begin{eqnarray*}
C^{(-k)}_{n}\equiv C^{(-k)}_{m}\pmod{p^{N}}. 
\end{eqnarray*}
\end{theorem}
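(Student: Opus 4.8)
The plan is to run the type-$C$ analogue of the proof of Theorem \ref{period1}, using the duality formula (\ref{duality formula2}) in place of (\ref{duality formula}); the only extra bookkeeping is the index shift $-k=-(k-1)-1$ carried by (\ref{duality formula2}). First I would move $n$ from a lower to an upper index: since $k-1\geq 0$, (\ref{duality formula2}) gives
\[
\C{-k}{n}=\C{-(k-1)-1}{n}=\C{-n-1}{k-1}.
\]
(When $k=1$ the right-hand side is $\C{-n-1}{0}=1$ by (\ref{explicit formula2}) and the claim is trivial; the argument below is nonetheless uniform in $k$.)

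Next I would feed this into the explicit formula (\ref{explicit formula2}) with upper index $-n-1$ and lower index $k-1$, which yields
\[
\C{-n-1}{k-1}=(-1)^{k-1}\sum_{j=0}^{k-1}(-1)^{j}j!\,\st{k}{j+1}(j+1)^{n+1},
\]
so the Stirling numbers, factorials and signs depend only on $k$, while $n$ enters solely through the power $(j+1)^{n+1}$. Reducing modulo $p^{N}$ term by term, I split the sum according to whether $p\mid j+1$. If $p\mid j+1$, then $p^{n+1}\mid (j+1)^{n+1}$, and $n\geq N$ forces $p^{N}\mid (j+1)^{n+1}$, so these terms drop out; this is exactly where the hypothesis $n\geq N$ is used. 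If $p\nmid j+1$, Euler's totient theorem gives $(j+1)^{\varphi(p^{N})}\equiv 1\pmod{p^{N}}$, and from $n\equiv m\pmod{\varphi(p^{N})}$, hence $n+1\equiv m+1\pmod{\varphi(p^{N})}$, we get $(j+1)^{n+1}\equiv (j+1)^{m+1}\pmod{p^{N}}$. Combining the two cases gives $\C{-n-1}{k-1}\equiv\C{-m-1}{k-1}\pmod{p^{N}}$, and applying (\ref{duality formula2}) once more to both sides returns the desired $\C{-k}{n}\equiv\C{-k}{m}\pmod{p^{N}}$.

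I do not expect a genuine obstacle here: the proof is structurally the same as that of Theorem \ref{period1}, and the only points needing care are the shifted shape of the duality (\ref{duality formula2}) and checking that $n,m\geq N$ already suffices -- after the shift one in fact only needs $n+1\geq N$ -- to annihilate the terms with $p\mid j+1$. One could alternatively try to deduce the statement from Theorem \ref{period1} via the relation $\B{k}{n}=\C{k}{n}+\C{k-1}{n-1}$, but unwinding that recursion produces a sum of poly-Bernoulli numbers whose upper and lower indices both shift with the summation variable, which does not match up cleanly under $n\equiv m\pmod{\varphi(p^{N})}$; the duality route above is the cleaner one.
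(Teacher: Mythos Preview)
Your proposal is correct and follows essentially the same route as the paper: apply the type-$C$ duality to trade $\C{-k}{n}$ for $\C{-n-1}{k-1}$, expand via the explicit formula (\ref{explicit formula2}), kill the terms with $p\mid j+1$ using $n\geq N$, invoke Euler's theorem on the rest, and dualize back. Your write-up is in fact a bit more careful than the paper's (you note the $k=1$ edge case and make explicit where $n\geq N$ enters), and your closing remark that deducing this from Theorem~\ref{period1} via $\B{k}{n}=\C{k}{n}+\C{k-1}{n-1}$ is awkward mirrors the paper's own comment that a direct proof is preferable.
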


\begin{proof}
It seems that a reasonable way to prove it would be to give a direct proof, rather than using Theorem \ref{period1}.

Assuming that $n, m \geq N$, using the explicit formula (\ref{explicit formula2}), we have 
\begin{eqnarray*}
C^{(-n-1)}_{k-1}
&=& (-1)^{k-1}\sum^{k-1}_{l=0}(-1)^{l}l!\st{k}{l+1}(l+1)^{n+1}\\
&\equiv & (-1)^{k-1}\sum^{k-1}_{\substack{l=0\\p\not \hspace{2pt} |\hspace{2pt}l+1}}(-1)^{l}l!\st{k}{l+1}(l+1)^{n+1}
\pmod{p^{N}}. 
\end{eqnarray*}
Euler's totient theorem leads us to obtain 
\begin{eqnarray*}
C^{(-n-1)}_{k-1}\equiv C^{(-m-1)}_{k-1}\pmod{p^{N}}
\end{eqnarray*}
By applying the duality formula (\ref{duality formula}) to the both sides, 
we obtain the claim.
\end{proof}

\section{Congruence properties}

In this section, we discuss the congruence properties of poly-Bernoulli numbers with negative upper-indices. 
First, we show the following relation. 
\begin{theorem}\label{yosou2}
For any odd prime $p$ and any non-negative integer $k$, we have 
\begin{eqnarray*}
\B{-k}{p-1}\equiv 
\begin{cases}
1\pmod{p} & \text{if $k=0$ {\rm or} $k\not\equiv 0\pmod{p-1}$, } \\
2\pmod{p} & \text{if $k\neq 0$ {\rm and} $k\equiv 0\pmod{p-1}$. }
\end{cases}
\end{eqnarray*}
\end{theorem}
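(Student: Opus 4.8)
The plan is to argue directly from the explicit formula (\ref{explicit formula}). With upper index $-k$ and $n=p-1$ it reads
\begin{align*}
\B{-k}{p-1}=\sum_{m=0}^{p-1}(-1)^{m}\,m!\,\st{p-1}{m}(m+1)^{k},
\end{align*}
where we used that $p-1$ is even. First I would dispose of the degenerate indices: when $k=0$ the defining series reduces to $e^{t}$, so $\B{0}{n}=1$ for all $n$ and in particular $\B{0}{p-1}=1$; and when $k\geq 1$ the term $m=p-1$ satisfies $(m+1)^{k}=p^{k}\equiv 0\pmod{p}$ and may be discarded, while the term $m=0$ is zero since $\st{p-1}{0}=0$. (This is precisely the structural reason the case $k=0$ is exceptional: there the top term $m=p-1$ does not drop out.) Thus for $k\geq 1$ only the indices $1\leq m\leq p-2$ survive modulo $p$.

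The heart of the argument is to evaluate $m!\,\st{p-1}{m}$ modulo $p$ for $1\leq m\leq p-2$. Using (\ref{stirling explicit}) one has $m!\,\st{p-1}{m}=(-1)^{m}\sum_{l=0}^{m}(-1)^{l}\binom{m}{l}l^{\,p-1}$; since $1\leq l\leq m\leq p-2<p$, Fermat's little theorem gives $l^{\,p-1}\equiv 1\pmod{p}$ for $l\geq 1$, and the $l=0$ term vanishes. Hence $m!\,\st{p-1}{m}\equiv(-1)^{m}\sum_{l=1}^{m}(-1)^{l}\binom{m}{l}=(-1)^{m}\bigl((1-1)^{m}-1\bigr)=(-1)^{m+1}\pmod{p}$. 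Feeding this back, the $m$-th summand becomes $(-1)^{m}(-1)^{m+1}(m+1)^{k}=-(m+1)^{k}$, so that
\begin{align*}
\B{-k}{p-1}\equiv-\sum_{m=1}^{p-2}(m+1)^{k}=-\sum_{j=2}^{p-1}j^{k}\pmod{p}.
\end{align*}

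It then remains to invoke the classical congruence for power sums: for $k\geq 1$,
\begin{align*}
\sum_{j=1}^{p-1}j^{k}\equiv
\begin{cases}
-1\pmod{p} & \text{if }(p-1)\mid k,\\
\phantom{-}0\pmod{p} & \text{if }(p-1)\nmid k,
\end{cases}
\end{align*}
which is proved in one line by summing a geometric series over the powers of a primitive root modulo $p$. Since $\sum_{j=2}^{p-1}j^{k}=\sum_{j=1}^{p-1}j^{k}-1$, this is congruent to $-2$ when $(p-1)\mid k$ and to $-1$ otherwise; negating gives $\B{-k}{p-1}\equiv 2\pmod{p}$ precisely when $k\neq 0$ and $k\equiv 0\pmod{p-1}$, and $\B{-k}{p-1}\equiv 1\pmod{p}$ when $k\not\equiv 0\pmod{p-1}$. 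Combined with $\B{0}{p-1}=1$, this is exactly the asserted dichotomy. I do not expect a serious obstacle; the only points demanding care are the two boundary indices $m=0$ and $m=p-1$ and the sign bookkeeping in the Stirling computation.
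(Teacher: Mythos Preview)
Your proof is correct and follows essentially the same route as the paper: both start from the explicit formula (\ref{explicit formula}), discard the boundary terms $m=0$ and $m=p-1$, expand $m!\st{p-1}{m}$ via (\ref{stirling explicit}), apply Fermat's little theorem to collapse the inner sum to $-1$, and finish with the standard power-sum congruence $\sum_{j=1}^{p-1}j^{k}\equiv -[\,(p-1)\mid k\,]\pmod{p}$. The only cosmetic difference is in the $k=0$ case, where the paper invokes duality ($\B{0}{p-1}=\B{1-p}{0}=1$) while you read $\B{0}{n}=1$ directly from the generating function.
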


\begin{proof}
In case when $k=0$, it is easy to see $\B{0}{p-1}=\B{1-p}{0}=1$.

In case when $k>0$, we have  
\begin{eqnarray*}
\B{-k}{p-1}&=&(-1)^{p-1}\sum^{p-1}_{m=0}(-1)^{m}m!\st{p-1}{m}(m+1)^{k}\\
                     &\equiv &\sum^{p-2}_{m=1}(-1)^{m}m!\st{p-1}{m}(m+1)^{k} \pmod{p}
\end{eqnarray*}
using the explicit formula (\ref{explicit formula}). 
Applying the defining formula (\ref{stirling explicit}) of Stirling numbers of the second kind, 
it turns to
\begin{eqnarray*}
\B{-k}{p-1}&\equiv &\sum^{p-2}_{m=1}(m+1)^{k}\sum^{m}_{l=1}(-1)^{l}\binom{m}{l}l^{p-1} \pmod{p}. 
\end{eqnarray*}
Fermat's little theorem leads us to 
\begin{eqnarray*}
\B{-k}{p-1}&\equiv &\sum^{p-2}_{m=1}(m+1)^{k}\sum^{m}_{l=1}(-1)^{l}\binom{m}{l} \pmod{p}\\
                    &\equiv &\sum^{p-2}_{m=1}(m+1)^{k}\left( \sum^{m}_{l=0}(-1)^{l}\binom{m}{l} -1\right) \pmod{p}\\
                    &\equiv &\sum^{p-2}_{m=1}(m+1)^{k}\left( (1-1)^{m}-1\right) \pmod{p}\\
                    &\equiv &-\sum^{p-2}_{m=1}(m+1)^{k} \pmod{p}\\
                    &\equiv &
\begin{cases}
1\pmod{p} & \text{if $k\not\equiv 0\pmod{p-1}$, }\\
2\pmod{p} & \text{if $k\equiv 0\pmod{p-1}$. }
\end{cases}
\end{eqnarray*}
\end{proof}

The corresponding property of the above theorem for poly-Bernoulli numbers of type-$C$ is as follows.
\begin{theorem}\label{yosou2''}
For any odd prime $p$ and any non-negative integer $k$, we have 
\begin{eqnarray*}
C^{(-k-1)}_{p-2}\equiv 
\begin{cases}
0\pmod{p} & \text{if $k\not\equiv 0\pmod{p-1}$, } \\
1\pmod{p} & \text{if $k\equiv 0\pmod{p-1}$. }
\end{cases}
\end{eqnarray*}
\end{theorem}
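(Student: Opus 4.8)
The plan is to mimic the proof of Theorem~\ref{yosou2} almost verbatim, using the explicit formula (\ref{explicit formula2}) for type-$C$ poly-Bernoulli numbers instead of (\ref{explicit formula}), together with the duality formula (\ref{duality formula2}) to reduce $\C{-k-1}{p-2}$ to something more tractable. First I would write, via (\ref{explicit formula2}),
\begin{eqnarray*}
\C{-k-1}{p-2}&=&(-1)^{p-2}\sum_{m=0}^{p-2}(-1)^m m!\st{p-1}{m+1}(m+1)^{k+1}.
\end{eqnarray*}
The terms with $p\mid m+1$ (only $m=p-1$, which is outside the range) cause no trouble here, but I should be a little careful: in the range $0\le m\le p-2$ we have $1\le m+1\le p-1$, so $p\nmid m+1$ automatically, and $(m+1)^{k+1}$ can be reduced using Fermat provided $m+1\not\equiv 0$. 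The key move, exactly as before, is to substitute the defining formula (\ref{stirling explicit}) for $\st{p-1}{m+1}$, namely $\st{p-1}{m+1}=\frac{(-1)^{m+1}}{(m+1)!}\sum_{l=0}^{m+1}(-1)^l\binom{m+1}{l}l^{p-1}$, and then apply Fermat's little theorem $l^{p-1}\equiv 1\pmod p$ for $1\le l\le m+1\le p-1$ (the $l=0$ term vanishes anyway). After the factorials cancel, the inner alternating binomial sum $\sum_{l=0}^{m+1}(-1)^l\binom{m+1}{l}$ collapses to $(1-1)^{m+1}=0$ once we restore the $l=0$ term, leaving a contribution of $-1$ from that restored term.

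Carrying this through, I expect to land on
\begin{eqnarray*}
\C{-k-1}{p-2}&\equiv&-\sum_{m=1}^{p-2}(-1)^{\text{(sign)}}(m+1)^{k+1}\cdot(\text{something})\pmod p,
\end{eqnarray*}
and after cleaning up the signs the statement should reduce to evaluating a power sum $\sum_{m=1}^{p-2}(m+1)^{k+1}$ or equivalently $\sum_{j=2}^{p-1}j^{k+1}=\sum_{j=1}^{p-1}j^{k+1}-1$. The standard fact is that $\sum_{j=1}^{p-1}j^{a}\equiv -1\pmod p$ if $(p-1)\mid a$ and $\equiv 0\pmod p$ otherwise. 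Here $a=k+1$, so the two cases of the theorem should come out according to whether $(p-1)\mid k+1$ — but the theorem is stated in terms of $(p-1)\mid k$, so I need to double-check that the index bookkeeping in (\ref{explicit formula2}) (the $\st{n+1}{m+1}$ shift and the $(m+1)^k$ versus $(m+1)^{k+1}$) produces the exponent $k$ and not $k+1$ in the relevant power sum. The most likely clean route is actually to use duality first: $\C{-k-1}{p-2}=\C{-(p-2)-1}{k}=\C{-(p-1)}{k}$, then apply (\ref{explicit formula2}) to $\C{-(p-1)}{k}=(-1)^k\sum_{m=0}^{k}(-1)^m m!\st{k+1}{m+1}(m+1)^{p-1}$, and reduce $(m+1)^{p-1}$ by Fermat to $1$ when $p\nmid m+1$; the surviving terms then involve $\sum(-1)^m m!\st{k+1}{m+1}$, which by (\ref{explicit formula2}) with $k=0$ is essentially $\C{0}{k}$, i.e.\ $1$, and the error terms from the dropped $p\mid m+1$ indices must be accounted for.

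The main obstacle will be exactly this index/sign accounting: making sure the power-sum exponent is correct, and handling the terms where $p\mid m+1$ when one works on the $\C{-(p-1)}{k}$ side — those are precisely the terms where Fermat does not apply and they are what should flip the answer from $0$ to $1$ in the case $(p-1)\mid k$. I would organize the final proof as: (i) reduce via duality (\ref{duality formula2}) to $\C{-(p-1)}{k}$; (ii) expand by (\ref{explicit formula2}); (iii) split the sum according to $p\mid m+1$ or not, apply Fermat to the coprime part, and recognize the resulting alternating sum as $\C{0}{k}=1$; (iv) show the remaining (non-coprime) part is $\equiv 0$ unless $(p-1)\mid k$, in which case it contributes the extra $1$. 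A clean alternative is to use the relation $\B{-k}{n}=\C{-k}{n}+\C{-k-1}{n-1}$ from the introduction with $n=p-1$: that gives $\C{-k-1}{p-2}=\B{-k}{p-1}-\C{-k}{p-1}$, so I would only need a congruence for $\C{-k}{p-1}\pmod p$ to combine with Theorem~\ref{yosou2}; this may be the shortest path if $\C{-k}{p-1}$ turns out to be congruent to a constant (presumably $1$) modulo $p$, which a quick expansion via (\ref{explicit formula2}) should confirm, and then the two cases $1-1=0$ and $2-1=1$ match the claim immediately.
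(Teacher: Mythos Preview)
Your first, direct approach is exactly what the paper does: expand $\C{-k-1}{p-2}$ by (\ref{explicit formula2}), substitute (\ref{stirling explicit}) for $\st{p-1}{m+1}$, apply Fermat to $l^{p-1}$, and collapse the alternating binomial sum to $(1-1)^{m+1}-1=-1$. Your only hesitation, about whether the surviving power sum has exponent $k$ or $k+1$, is resolved by the factorial cancellation: when you insert $\st{p-1}{m+1}=\dfrac{(-1)^{m+1}}{(m+1)!}\sum_l(-1)^l\binom{m+1}{l}l^{p-1}$ into $(-1)^m m!\,\st{p-1}{m+1}(m+1)^{k+1}$, the factor $m!/(m+1)!=1/(m+1)$ eats one power of $(m+1)$, leaving $(m+1)^k$. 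Hence one lands on $-\sum_{m=0}^{p-2}(m+1)^k=-\sum_{j=1}^{p-1}j^k$, and the case split is indeed by $(p-1)\mid k$, matching the statement. No duality is needed.

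Your two alternatives are genuinely different routes. The duality-first route replaces $\C{-k-1}{p-2}$ by $\C{-(p-1)}{k}$ and then must handle the indices $m$ with $p\mid m+1$ separately, which is more work than the direct computation. The route via $\B{-k}{p-1}=\C{-k}{p-1}+\C{-k-1}{p-2}$ is very clean \emph{once} you know $\C{-k}{p-1}\equiv 1\pmod p$; in the paper that fact is Theorem~\ref{yosou2'}, proved \emph{after} the present theorem, so using it here would invert the logical order (though there is no circularity, since Theorem~\ref{yosou2'} is proved independently).
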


\begin{proof}
By the explicit formula (\ref{explicit formula2}) for poly-Bernoulli numbers and 
the explicit formula (\ref{stirling explicit}) for Stirling numbers of the second kind, 
\begin{eqnarray*}
C^{(-k-1)}_{p-2}
&=& (-1)^{p-2}\sum^{p-2}_{m=0}(-1)^{m}m!\st{p-1}{m+1}(m+1)^{k+1}\\
&=& \sum^{p-2}_{m=0} (m+1)^{k} \sum^{m+1}_{l=0} (-1)^{l}\binom{m+1}{l} l^{p-1}
\end{eqnarray*}
Fermat's little theorem leads to 
\begin{eqnarray*}
C^{(-k-1)}_{p-2}&\equiv &\sum^{p-2}_{m=0}(m+1)^{k}\sum^{m+1}_{l=1}(-1)^{l}\binom{m+1}{l}\pmod{p}\\
                    &\equiv &\sum^{p-2}_{m=0}(m+1)^{k}\left\{ (1-1)^{m+1}-1\right\}\pmod{p}\\
                    &\equiv &-\sum^{p-2}_{m=0}(m+1)^{k}\pmod{p}\\
                    &\equiv &
\begin{cases}
0 \pmod{p} & \text{if $k\not\equiv 0 \bmod{p-1}$, }\\
1 \pmod{p} & \text{if $k\equiv 0 \bmod{p-1}$. }
\end{cases}
\end{eqnarray*}
\end{proof}

Furthermore Fermat's little theorem leads us to the following property. 
\begin{theorem}\label{yosou2'}
For any odd prime $p$ and any non-negative integer $k$, we have
\begin{eqnarray*}
C^{(-k-1)}_{p-1}\equiv 1\pmod{p}. 
\end{eqnarray*}
\end{theorem}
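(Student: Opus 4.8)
The plan is to follow the same pattern as the proofs of Theorems~\ref{yosou2} and \ref{yosou2''}, starting from the explicit formula (\ref{explicit formula2}) with $n=p-1$ and upper index $-k-1$. First I would write
\[
C^{(-k-1)}_{p-1} = (-1)^{p-1}\sum_{m=0}^{p-1}(-1)^m m!\st{p}{m+1}(m+1)^{k+1},
\]
and then substitute the explicit formula (\ref{stirling explicit}) for $\st{p}{m+1}$ to obtain a double sum
\[
C^{(-k-1)}_{p-1} \equiv \sum_{m=0}^{p-1}(m+1)^{k}\sum_{l=0}^{m+1}(-1)^l\binom{m+1}{l}l^{p}\pmod p,
\]
after absorbing the sign $(-1)^{m+1}$ from (\ref{stirling explicit}) into the alternating inner sum and pulling one factor $(m+1)$ out of $(m+1)^{k+1}$ together with the $1/(m+1)!$ (so that $m!/(m+1)! \cdot (m+1) = 1$); I should double-check this bookkeeping carefully since it is exactly where sign and index errors creep in.

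Next I would apply Fermat's little theorem in the form $l^{p}\equiv l\pmod p$ for all $l$, which reduces the inner sum to $\sum_{l=0}^{m+1}(-1)^l\binom{m+1}{l}l$. The key elementary identity here is that $\sum_{l=0}^{N}(-1)^l\binom{N}{l}l$ equals $-1$ when $N=1$ and $0$ for all $N\ge 2$ (it is the first finite difference of the identity sequence, which vanishes beyond the first order). Since $m$ runs from $0$ to $p-1$, the index $N=m+1$ runs from $1$ to $p$, so only the term $m=0$ (i.e. $N=1$) survives. That term contributes $(0+1)^k \cdot(-1) = -1$, and the $m=p-1$ term with $(m+1)^k=p^k\equiv 0$ is harmless in any case. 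Hence
\[
C^{(-k-1)}_{p-1}\equiv -(-1) = 1 \pmod p,
\]
independently of $k$, which is the claim. I should be mildly careful about the $l=0$ term in the inner sum (it is $0$, so it can be included or excluded freely) and about the boundary term $l=m+1$.

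The only genuine subtlety — really the "main obstacle," though it is minor — is getting the combinatorial identity $\sum_{l}(-1)^l\binom{N}{l}l=0$ for $N\ge 2$ correctly and matching it against the index range $1\le N\le p$; unlike the previous two theorems, here the surviving term is the \emph{first} one rather than a uniform sum over all $m$, so the answer does not split into cases according to $k\bmod(p-1)$. An alternative, perhaps cleaner, route is to use the three-term relation $\B{-k-1}{p-1}=\C{-k-1}{p-1}+\C{-k-2}{p-2}$ from the introduction, i.e. $\C{-k-1}{p-1}=\B{-(k+1)}{p-1}-\C{-(k+1)-1}{p-2}$, and then read off the right-hand side from Theorems~\ref{yosou2} and \ref{yosou2''}: when $k+1\not\equiv 0\pmod{p-1}$ one gets $1-0=1$, and when $k+1\equiv 0\pmod{p-1}$ one gets $2-1=1$; either way the value is $1\pmod p$. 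I would present the direct computation as the main proof (for self-containedness and to mirror the surrounding theorems) and possibly remark on this shortcut.
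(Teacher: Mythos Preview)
Your main argument is essentially the paper's: both start from (\ref{explicit formula2}), expand $\st{p}{m+1}$ via (\ref{stirling explicit}), and reduce to the vanishing of $\sum_l(-1)^l\binom{m+1}{l}l$ for $m\ge 1$ so that only $m=0$ survives; the paper carries this out by writing $\binom{m+1}{l}l=(m+1)\binom{m}{l-1}$ and then applying $l^{p-1}\equiv 1$, while you apply $l^p\equiv l$ first and then quote the finite-difference identity --- same computation, different order. One bookkeeping slip: your displayed double sum is missing the overall minus sign that you correctly reinstate in the final line ``$-(-1)=1$''. Your alternative route via $\C{-k-1}{p-1}=\B{-(k+1)}{p-1}-\C{-(k+1)-1}{p-2}$ together with Theorems~\ref{yosou2} and \ref{yosou2''} does not appear in the paper and is a clean shortcut worth keeping as a remark.
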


\begin{proof}
By the explicit formula (\ref{explicit formula2}), 
\begin{eqnarray*}
C^{(-k-1)}_{p-1}&=&(-1)^{p-1}\sum^{p-1}_{m=0}(-1)^{m}m!\st{p}{m+1}(m+1)^{k+1}\\
                     &\equiv &\sum^{p-2}_{m=0}(-1)^{m}m!\st{p}{m+1}(m+1)^{k+1} \pmod{p}. 
\end{eqnarray*}
Using the defining formula (\ref{stirling explicit}) of Stirling numbers of the second kind, 
\begin{eqnarray*}
C^{(-k-1)}_{p-1}&\equiv &-\sum^{p-2}_{m=0}(m+1)^{k}\sum^{m+1}_{l=1}(-1)^{l}\binom{m+1}{l}l^{p} \pmod{p}\\
&\equiv &-\sum^{p-2}_{m=0}(m+1)^{k+1}\sum^{m+1}_{l=1}(-1)^{l}\binom{m}{l-1}l^{p-1} \pmod{p}
\end{eqnarray*}
Fermat's little theorem leads to 
\begin{eqnarray*}
C^{(-k-1)}_{p-1}&\equiv &-\sum^{p-2}_{m=0}(m+1)^{k+1}\sum^{m+1}_{l=1}(-1)^{l}\binom{m}{l-1} \pmod{p}\\
                    &\equiv &-\sum^{p-2}_{m=0}(m+1)^{k+1}\sum^{m}_{l=0}(-1)^{l+1}\binom{m}{l}\pmod{p}\\
                    &\equiv &\sum^{p-2}_{m=0}(m+1)^{k+1}(1-1)^{m}\pmod{p}\\
                    &\equiv &1 \pmod{p}. 
\end{eqnarray*}
\end{proof}

The duality formula (\ref{duality formula}) for poly-Bernoulli numbers leads to the following. 
\begin{theorem}\label{yosou3}
Let $p$ be an odd prime and $k$ and $n$ be positive integers. 
If $k\equiv n\equiv 0, 1\pmod{p-1}$, then we have 
\begin{eqnarray*}
\B{-k}{n}\equiv 2\pmod{p}. 
\end{eqnarray*}
\end{theorem}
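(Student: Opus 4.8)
The plan is to reduce the claim to an application of the explicit formula~(\ref{explicit formula}) combined with Fermat's little theorem, handling the two residue classes of $n$ modulo $p-1$ separately. First I would write, using~(\ref{explicit formula}),
\[
\B{-k}{n}=(-1)^{n}\sum_{m=0}^{n}(-1)^m m!\st{n}{m}(m+1)^k,
\]
and then proceed as in the proof of Theorem~\ref{yosou2}: expand $\st{n}{m}$ via~(\ref{stirling explicit}) to get a double sum in which the inner sum is $\sum_{l=0}^{m}(-1)^l\binom{m}{l}l^{n}$. The hypothesis $k\equiv n\pmod{p-1}$ with $k,n\equiv 0,1$ means $n$ is itself congruent to $0$ or $1$ modulo $p-1$, so Fermat's little theorem gives $l^{n}\equiv l^{0}=1$ or $l^{n}\equiv l$ respectively for $l\not\equiv 0\pmod p$; the terms with $p\mid l$ vanish modulo $p$ anyway (since $n\ge 1$).

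In the case $n\equiv 1\pmod{p-1}$ the inner sum collapses, modulo $p$, to $\sum_{l=0}^{m}(-1)^l\binom{m}{l}l=-m(1-1)^{m-1}$, which is $0$ for $m\ge 2$ and manageable for $m=0,1$; this forces $\B{-k}{n}$ to depend only on a handful of low-order Stirling terms, and a short direct check should yield $\equiv 2\pmod p$. In the case $n\equiv 0\pmod{p-1}$ (with $n\ge 1$, so $n\ge p-1$), the inner sum becomes $\sum_{l=1}^{m}(-1)^l\binom{m}{l}=-1$ for $1\le m\le p-2$, exactly reproducing the computation in Theorem~\ref{yosou2}, and one is led to evaluate $-\sum_{m}(m+1)^k$ over the appropriate range. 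Here the hypothesis $k\equiv 0\pmod{p-1}$ puts us in the ``$2\pmod p$'' branch of Theorem~\ref{yosou2}, and $k\equiv 1\pmod{p-1}$ would need separate attention.

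Given the structure of the surrounding results, however, the cleanest route — and the one I would actually pursue — is to invoke symmetry. By the duality formula~(\ref{duality formula}), $\B{-k}{n}=\B{-n}{k}$, so the roles of $k$ and $n$ are interchangeable; thus it suffices to treat, say, the ``diagonal'' behavior under the two congruence conditions. Combining this with Theorem~\ref{yosou2} (which handles $n=p-1$, i.e.\ $n\equiv 0\pmod{p-1}$ with the smallest representative) and Theorem~\ref{period1} (periodicity modulo $p$ in the index, with period $\varphi(p)=p-1$, valid once $n\ge 1$), one gets for free that $\B{-k}{n}\equiv\B{-k}{p-1}$ whenever $n\equiv 0\pmod{p-1}$ and $n\ge 1$; and if moreover $k\equiv 0\pmod{p-1}$ with $k\ge 1$ then $\B{-k}{p-1}\equiv 2\pmod p$ by Theorem~\ref{yosou2}. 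The remaining cases ($n$ or $k$ $\equiv 1\pmod{p-1}$) are reached by the same periodicity argument applied in the other index together with the duality~(\ref{duality formula}), reducing everything to the base value $\B{-1}{1}$ or $\B{-k}{1}$, which is a small explicit computation.

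The main obstacle I anticipate is the case $k\equiv n\equiv 1\pmod{p-1}$, since Theorem~\ref{yosou2} only directly supplies information at index $p-1$ (the $0$-class), not at the $1$-class; to close this I would compute $\B{-1}{n}$ for $n\equiv 1\pmod{p-1}$ directly — note $\B{-1}{n}=2^{n}-1$ by the known evaluation (or from~(\ref{explicit formula}) since $\st{n}{m}$-weighted sum telescopes), giving $2^{n}-1\equiv 2-1=1$? — so one must be careful: in fact the duality sends $\B{-k}{n}$ with $k\equiv 1$ to $\B{-n}{k}$ with a small second index $k$, and it is the \emph{combination} of both indices being in the nontrivial-but-small classes that conspires to give $2$ rather than $1$. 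Pinning down exactly which low-order Stirling terms survive modulo $p$ in that mixed case, and verifying the arithmetic gives $2$ and not $1$, is where the real work lies; everything else is bookkeeping with Fermat's little theorem and the two functional equations~(\ref{explicit formula}) and~(\ref{duality formula}).
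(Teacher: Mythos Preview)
Your ``cleanest route''---duality~(\ref{duality formula}) plus the periodicity of Theorem~\ref{period1} to reduce to the anchor values $\B{1-p}{p-1}$ (covered by Theorem~\ref{yosou2}) and $\B{-1}{1}$---is exactly the paper's proof. The obstacle you anticipate in the $k\equiv n\equiv 1$ case is not real: since both indices are $\ge 1$, which is all that Theorem~\ref{period1} with $N=1$ requires, periodicity applied in each index (swapping via duality) already collapses that case to the single value $\B{-1}{1}$, and the paper simply evaluates it from~(\ref{explicit formula}) as
\[
\B{-1}{1}=(-1)^{1}\sum_{m=0}^{1}(-1)^{m}m!\st{1}{m}(m+1)^{1}=-(-1)\cdot 2=2.
\]
Your side remark $\B{-1}{n}=2^{n}-1$ is the source of your worry and is simply off by one: the correct closed form is $\B{-1}{n}=2^{n}$, which for $n\equiv 1\pmod{p-1}$ gives $2^{n}\equiv 2\pmod p$ as desired---but you do not even need this general formula once the reduction to $n=1$ is in hand.
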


\begin{proof}
In case when $n\equiv 0\pmod{p-1}$, it holds by 
Theorem \ref{yosou2} that $\B{-n}{p-1}\equiv 2 \pmod{p}$, thus we obtain $\B{1-p}{p-1}\equiv 2 \pmod{p}$. 
In case when $n\equiv 1\pmod{p-1}$, by the explicit formula (\ref{explicit formula}), 
\begin{eqnarray*}
\B{-1}{1}&=&(-1)^{1}\sum^{1}_{m=0}(-1)^{m}m!\st{1}{m}(m+1)^{1}\\
                     &=&-(-1)\cdot 2=2. 
\end{eqnarray*}
Using the duality (\ref{duality formula}) and the periodicity (Theorem \ref{period1}), we obtain the claim in both cases. 
\end{proof}

Next we obtain the following property using the congruence relation between poly-Bernoulli numbers and classical Bernoulli numbers.
\begin{theorem}\label{yosou3'}
If $p$ is a prime number with $p\geq 7$, then 
\begin{eqnarray*}
\B{-p+3}{p-3}\equiv 0\pmod{p}. 
\end{eqnarray*}
\end{theorem}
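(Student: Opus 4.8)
The plan is to transfer $\B{-p+3}{p-3}=\B{-(p-3)}{p-3}$, modulo $p$, onto the positive-index number $\B{2}{p-3}$, to identify the latter with $\C{2}{p-3}$ by means of a relation to classical Bernoulli numbers, and then to exhibit two different expressions for $\B{-(p-3)}{p-3}$ which together force $\C{2}{p-3}\equiv 0\pmod p$.

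First I would apply Fermat's little theorem to the explicit formulas. In $\B{-(p-3)}{p-3}=(-1)^{p-3}\sum_{m=0}^{p-3}(-1)^m m!\st{p-3}{m}(m+1)^{p-3}$ the exponent enters only through the factors $(m+1)^{p-3}$, and since $1\le m+1\le p-2$ we have $(m+1)^{p-3}\equiv (m+1)^{-2}\pmod p$; comparison with (\ref{explicit formula}) at $k=2$ gives $\B{-(p-3)}{p-3}\equiv\B{2}{p-3}\pmod p$, the number $\B{2}{p-3}$ being $p$-integral since the only denominators occurring are the $(m+1)^2$ with $m+1\le p-2$, all prime to $p$. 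The very same manipulation applied to (\ref{explicit formula2}) yields $\C{-(p-3)}{p-3}\equiv\C{2}{p-3}\pmod p$, with $\C{2}{p-3}$ likewise $p$-integral.

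Next I would read $\B{-(p-3)}{p-3}$ in a second way. The relation $\B{k}{n}=\C{k}{n}+\C{k-1}{n-1}$ at $k=-(p-3)$, $n=p-3$ gives $\B{-(p-3)}{p-3}=\C{-(p-3)}{p-3}+\C{-(p-2)}{p-4}$, and the type-$C$ duality (\ref{duality formula2}) with $k=p-3$, $n=p-4$ collapses the second term, $\C{-(p-2)}{p-4}=\C{-(p-4)-1}{p-3}=\C{-(p-3)}{p-3}$, so that $\B{-(p-3)}{p-3}=2\,\C{-(p-3)}{p-3}$ holds on the nose. The same relation at $k=2$ reads $\B{2}{p-3}=\C{2}{p-3}+\C{1}{p-4}$, and here the hypothesis $p\ge 7$ enters essentially: $\C{1}{p-4}$ is the classical Bernoulli number $B_{p-4}$, which vanishes because $p-4$ is odd and at least $3$, whence $\B{2}{p-3}=\C{2}{p-3}$. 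Putting everything together, modulo $p$ one has
\[
\C{2}{p-3}=\B{2}{p-3}\equiv\B{-(p-3)}{p-3}=2\,\C{-(p-3)}{p-3}\equiv 2\,\C{2}{p-3},
\]
so $\C{2}{p-3}\equiv 0\pmod p$, and therefore $\B{-p+3}{p-3}=\B{-(p-3)}{p-3}\equiv\B{2}{p-3}=\C{2}{p-3}\equiv 0\pmod p$.

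I do not expect a genuine obstacle here: the substance is just the Fermat collapse of exponent $-(p-3)$ into exponent $2$ at level $p-3$, together with the double reading of $\B{-(p-3)}{p-3}$ as both $2\,\C{-(p-3)}{p-3}$ and, modulo $p$, $\C{2}{p-3}$. The points requiring care are the index bookkeeping in the two applications of duality, the $p$-integrality of $\B{2}{p-3}$ and $\C{2}{p-3}$, and the fact that $p\ge 7$ is precisely what makes $B_{p-4}=0$ (for $p=5$ one has $B_1=-1/2\neq 0$, and the asserted congruence indeed fails). Should one prefer to avoid the type-$C$ identities, an alternative but more computational route is to expand $\C{2}{p-3}$ directly via (\ref{explicit formula2}) and (\ref{stirling explicit}), use $\sum_{l=1}^{M}(-1)^{l-1}\binom{M}{l}l^{-1}=H_M$, and reduce the claim to $\sum_{j=1}^{p-2}H_j/j^{3}\equiv 0\pmod p$, which follows from Wolstenholme's theorem together with the symmetry $i\mapsto p-i$.
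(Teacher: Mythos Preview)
Your proof is correct and uses the same ingredients as the paper's---the relation $\B{k}{n}=\C{k}{n}+\C{k-1}{n-1}$, the type-$C$ duality, Fermat's little theorem, and the vanishing of the odd classical Bernoulli number $B_{p-4}$---assembled in essentially the same way. The paper's route is slightly more direct: after reaching $\B{-(p-3)}{p-3}=2\,\C{-(p-2)}{p-4}$ it applies Fermat to shift the upper index to $1$ rather than $2$, landing immediately on $2\,\C{1}{p-4}=2B_{p-4}=0$ without the detour through $\C{2}{p-3}$ and the self-referential equation $\C{2}{p-3}\equiv 2\,\C{2}{p-3}$.
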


\begin{proof}
We have $\B{-p+3}{p-3}=C^{(-p+3)}_{p-3}+C^{(-p+2)}_{p-4}$ by the definition. 
Using the duality formula (\ref{duality formula2}), we obtain $\B{-p+3}{p-3}=2C^{(-p+2)}_{p-4}$. 
Fermat's little theorem leads to $C^{(-p+2)}_{n}\equiv C_{n}\pmod{p}$ for $1\leq n\leq p-2$. 
Then we have $\B{-p+3}{p-3}\equiv 2C_{p-4}\pmod{p}$. 
By the given condition $p\geq 7$, $p-4 \geq 3$ is odd, so we have  
$C_{p-4}=0$ from the well known property of classical Bernoulli numbers. 
Thus we obtain $\B{-p+3}{p-3}\equiv 0\pmod{p}$.
\end{proof}

Adding up the poly-Bernoulli numbers of one period, 
we obtain the following results. 
\begin{theorem}\label{yosou4}
Let $p$ be a prime, and $k, N$ be positive integers. 
For any positive integer $n\geq N$, we have 
\begin{eqnarray*}
\sum^{\varphi(p^{N})-1}_{i=0}\B{-k}{n+i}\equiv
\sum^{\varphi(p^{N})-1}_{i=0}\B{-n-i}{k}\equiv 0\pmod{p^{N}}. 
\end{eqnarray*}
\end{theorem}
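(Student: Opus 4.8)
The plan is to argue exactly as in the proof of Theorem~\ref{period1}: rewrite the sum via the explicit formula~(\ref{explicit formula}) and reduce the congruence to Euler's totient theorem, applied one monomial at a time.

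Since $n+i\ge n\ge N\ge 1$ for all $i\ge 0$, the duality formula~(\ref{duality formula}) gives $\B{-k}{n+i}=\B{-n-i}{k}$, so the first ``$\equiv$'' in the statement is really an equality and it is enough to prove
\[
\sum_{i=0}^{\varphi(p^{N})-1}\B{-n-i}{k}\equiv 0\pmod{p^{N}}.
\]
First I would insert the explicit formula in the form used above, $\B{-n-i}{k}=(-1)^{k}\sum_{l=0}^{k}(-1)^{l}l!\,\st{k}{l}(l+1)^{n+i}$, and interchange the two summations to obtain
\[
\sum_{i=0}^{\varphi(p^{N})-1}\B{-n-i}{k}=(-1)^{k}\sum_{l=0}^{k}(-1)^{l}l!\,\st{k}{l}\,S_{l},\qquad S_{l}:=\sum_{i=0}^{\varphi(p^{N})-1}(l+1)^{n+i}.
\]
It then suffices to show that $p^{N}\mid l!\,\st{k}{l}\,S_{l}$ for every $l$. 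Writing $v_{p}$ for the $p$-adic valuation, I would distinguish three cases. The term $l=0$ contributes nothing, since $\st{k}{0}=0$ for $k\ge 1$. If $l\ge 1$ and $p\mid l+1$, then each summand of $S_{l}$ satisfies $v_{p}\!\big((l+1)^{n+i}\big)=(n+i)\,v_{p}(l+1)\ge n\ge N$, whence $p^{N}\mid S_{l}$. If $l\ge 1$ and $p\nmid l+1$, then summing the geometric progression gives the identity $l\,S_{l}=(l+1)^{n}\big((l+1)^{\varphi(p^{N})}-1\big)$; since $p\nmid (l+1)^{n}$ and Euler's totient theorem gives $p^{N}\mid (l+1)^{\varphi(p^{N})}-1$, we deduce $v_{p}(S_{l})\ge N-v_{p}(l)$.

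The only delicate point --- and the one I would expect to be the single real obstacle --- is that the geometric-series identity divides through by $l$, which seems to lose $p$-adic precision when $p\mid l$. This is fixed by the trivial observation that $v_{p}(l!)\ge v_{p}(l)$ (because $l$ itself is one of the factors $1,2,\dots,l$), so that $v_{p}\!\big(l!\,\st{k}{l}\,S_{l}\big)\ge v_{p}(l!)+v_{p}(S_{l})\ge v_{p}(l)+\bigl(N-v_{p}(l)\bigr)=N$ (and when $v_{p}(l)>N$ the factor $l!$ already forces divisibility by $p^{N}$); when $p\nmid l$, the bound $v_{p}(S_{l})\ge N$ is enough on its own. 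Summing the contributions over $l$ then yields the congruence modulo $p^{N}$, and by the duality equality noted at the outset both sums in the statement are $\equiv 0\pmod{p^{N}}$. Finally I would point out that the argument uses only Euler's theorem and nowhere that $p$ is odd, so the case $p=2$ is included, and that the hypothesis $n\ge N$ is used precisely in the case $p\mid l+1$.
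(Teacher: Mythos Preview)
Your proof is correct and follows essentially the same route as the paper's: duality, the explicit formula~(\ref{explicit formula}), summing the geometric progression, and Euler's theorem. The only organizational differences are that the paper first treats $n=N$ and then invokes the periodicity of Theorem~\ref{period1} for $n>N$, and that it carries the combination $m!/m=(m-1)!$ as a single integer factor, so the division by $m$ disappears and your separate $p$\nobreakdash-adic valuation bookkeeping becomes unnecessary (though your version is of course also valid).
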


\begin{proof}
If $n=N$, using the explicit formula (\ref{explicit formula}) and the duality (\ref{duality formula}), we compute as
\begin{eqnarray*}
\sum^{\varphi(p^{N})-1}_{i=0}\B{-k}{N+i}
&=&\sum^{\varphi(p^{N})-1}_{i=0}\B{-N-i}{k}\\
&=&\sum^{\varphi(p^{N})-1}_{i=0}(-1)^{k}\sum^{k}_{m=0}(-1)^{m}m!\st{k}{m}(m+1)^{N+i}\\
&=&(-1)^{k}\sum^{k}_{m=1}(-1)^{m}m!\st{k}{m}\sum^{\varphi(p^{N})-1}_{i=0}(m+1)^{N+i}\\
&=&(-1)^{k}\sum^{k}_{m=1}(-1)^{m}m!\st{k}{m}\frac{(m+1)^{N}\{ (m+1)^{\varphi(p^{N})}-1 \}}{m}. 
\end{eqnarray*}
Euler's totient theorem leads us to obtain
\begin{eqnarray*}
\sum^{\varphi(p^{N})-1}_{i=0}\B{-k}{N+i}\equiv 0\pmod{p^{N}}. 
\end{eqnarray*}
The case $n>N$ reduces to the case $n=N$, by applying the periodicity given in Theorem \ref{period1}.
\end{proof}

By Theorem \ref{period1} and Theorem \ref{yosou4}, 
we generalize the results for modulo natural numbers. 

\begin{corollary}\label{cor3.7}
Let $k$ and $M$ be positive integers, and $M=p_{1}^{e_{1}}p_{2}^{e_{2}} \cdots p_{l}^{e_{l}}$ 
be the prime factorization of $M$, where $p_{1}, p_{2}, \ldots , p_{l}$ are distinct prime divisors, of $M$. 
For any integer $n \geq \max \{e_{j}\;|\;1\leq j \leq l\}$, we have 
\begin{eqnarray*}
\sum^{\varphi (M)-1}_{i=0}\B{-k}{n+i}\equiv
\sum^{\varphi (M)-1}_{i=0}\B{-n-i}{k}\equiv 0\pmod{M}. 
\end{eqnarray*}
\end{corollary}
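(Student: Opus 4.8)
The plan is to localize at each prime power dividing $M$ and invoke Theorem \ref{yosou4}, exactly as Corollary \ref{BcongM} was deduced from Theorem \ref{period1}. By the Chinese Remainder Theorem it suffices to show, for each $j$ with $1 \le j \le l$, that
\[
p_{j}^{e_{j}} \;\Big|\; \sum_{i=0}^{\varphi(M)-1}\B{-k}{n+i}.
\]
Since Euler's totient function is multiplicative and $M = p_{1}^{e_{1}}\cdots p_{l}^{e_{l}}$ with the $p_{j}$ distinct, the product formula gives $\varphi(p_{j}^{e_{j}}) \mid \varphi(M)$; write $\varphi(M) = q_{j}\,\varphi(p_{j}^{e_{j}})$ for a positive integer $q_{j}$.

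Next I would break the summation range $0 \le i \le \varphi(M)-1$ into $q_{j}$ consecutive blocks of length $\varphi(p_{j}^{e_{j}})$, obtaining
\[
\sum_{i=0}^{\varphi(M)-1}\B{-k}{n+i}
= \sum_{t=0}^{q_{j}-1}\ \sum_{s=0}^{\varphi(p_{j}^{e_{j}})-1}\B{-k}{\,n + t\,\varphi(p_{j}^{e_{j}}) + s\,}.
\]
For fixed $t$ put $n_{t} = n + t\,\varphi(p_{j}^{e_{j}})$. Because $n \ge \max\{e_{1},\dots,e_{l}\} \ge e_{j}$ we have $n_{t} \ge e_{j}$, so Theorem \ref{yosou4} applies with prime $p = p_{j}$, exponent $N = e_{j}$, and starting index $n_{t}$, yielding $\sum_{s=0}^{\varphi(p_{j}^{e_{j}})-1}\B{-k}{\,n_{t}+s\,}\equiv 0 \pmod{p_{j}^{e_{j}}}$. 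Summing over $t$ gives $p_{j}^{e_{j}} \mid \sum_{i=0}^{\varphi(M)-1}\B{-k}{n+i}$, and since this holds for every $j$, the sum is divisible by $M$. The companion statement for $\sum_{i=0}^{\varphi(M)-1}\B{-n-i}{k}$ follows at once, either by running the identical block decomposition on the dual side or simply by applying the duality formula (\ref{duality formula}) term by term.

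The argument is essentially bookkeeping; the only points needing a little care are the divisibility $\varphi(p_{j}^{e_{j}}) \mid \varphi(M)$ and the observation that shifting the start of a block by a multiple of $\varphi(p_{j}^{e_{j}})$ leaves us in the range where Theorem \ref{yosou4} is valid, which is exactly what the hypothesis $n \ge \max_{j} e_{j}$ guarantees. I do not expect any genuine obstacle here: this corollary stands to Theorem \ref{yosou4} precisely as Corollary \ref{BcongM} stands to Theorem \ref{period1}.
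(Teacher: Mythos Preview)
Your proof is correct and follows essentially the same approach as the paper: localize at each prime power $p_{j}^{e_{j}}$, split the sum of length $\varphi(M)$ into $\varphi(M)/\varphi(p_{j}^{e_{j}})$ blocks of length $\varphi(p_{j}^{e_{j}})$, and apply Theorem~\ref{yosou4} together with the duality formula. The only cosmetic difference is that the paper first invokes Theorem~\ref{period1} to collapse all blocks into a single block times the multiplier $\prod_{r\neq j}\varphi(p_{r}^{e_{r}})$ before applying Theorem~\ref{yosou4} once, whereas you apply Theorem~\ref{yosou4} separately to each shifted block; since Theorem~\ref{yosou4} is already stated for arbitrary $n\ge N$, your version is marginally more direct.
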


\begin{proof}
By Theorem \ref{period1}, 
\begin{eqnarray*}
\sum^{\varphi (M)-1}_{i=0}\B{-k}{n+i}
&=&\sum^{\varphi (p_{1}^{e_{1}})\cdots \varphi (p_{l}^{e_{l}})-1}_{i=0}\B{-k}{n+i}\\
&\equiv &\prod_{\substack{j=1\\j \not= r}}^{l} \varphi (p_{j}^{e_{j}})\sum^{\varphi (p_{r}^{e_{r}})-1}_{i=0}\B{-k}{n+i} \pmod{p_{r}^{e_{r}}}
\end{eqnarray*}
for $1\leq r\leq l$. By Theorem \ref{yosou4}, the right-hand side is congruent to 0 $\bmod{\ p_{r}^{e_{r}}}$. 
Using the duality (\ref{duality formula}), we obtain the claim. 
\end{proof}

Tying up Brewbaker's relation (\ref{Brew}) and Corollary \ref{cor3.7}, we immediately obtain the following interesting property on the number of lonesome matrices.  
\begin{theorem}
Let $k$ and $M$ be positive integers, and $M=p_{1}^{e_{1}}p_{2}^{e_{2}} \cdots p_{l}^{e_{l}}$ 
be the prime factorization of $M$, where $p_{1}, p_{2}, \ldots , p_{l}$ are distinct prime divisors, of $M$. 
For any integer $n \geq \max \{e_{j}\;|\;1\leq j \leq l\}$, we have 
\[ \sum^{\varphi (M)-1}_{i=0}L(k, n+i)\equiv \sum^{\varphi (M)-1}_{i=0}L(n+i, k )\equiv 0\pmod{M}. \]
\end{theorem}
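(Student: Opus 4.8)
The plan is to deduce this statement directly from Corollary \ref{cor3.7} by way of Brewbaker's relation (\ref{Brew}). The hypotheses on $k$, on $M$ and its prime factorization, and the lower bound $n \geq \max\{e_{j}\;|\;1\leq j \leq l\}$ are word for word those of Corollary \ref{cor3.7}, so the only thing to do is a translation of notation, exactly in the spirit of the earlier lonesum-matrix theorem that reinterprets Corollary \ref{BcongM}.

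First I would note that for every index $i$ with $0 \leq i \leq \varphi(M)-1$ the integer $n+i$ is positive, since $n+i \geq n \geq \max\{e_{j}\;|\;1\leq j \leq l\} \geq 1$; hence Brewbaker's relation applies to all the terms in both sums, giving $L(k,n+i) = \B{-k}{n+i}$ and $L(n+i,k) = \B{-(n+i)}{k} = \B{-n-i}{k}$. Summing over $i$ from $0$ to $\varphi(M)-1$ then yields
\[
\sum_{i=0}^{\varphi(M)-1} L(k,n+i) = \sum_{i=0}^{\varphi(M)-1} \B{-k}{n+i}, \qquad
\sum_{i=0}^{\varphi(M)-1} L(n+i,k) = \sum_{i=0}^{\varphi(M)-1} \B{-n-i}{k}.
\]
Next I would invoke Corollary \ref{cor3.7}, which states precisely that, under the present hypotheses, both right-hand sides vanish modulo $M$; this completes the proof.

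There is essentially no real obstacle: the theorem is simply the combinatorial face of Corollary \ref{cor3.7} seen through (\ref{Brew}). The single point deserving a sentence of care is checking that the arguments supplied to Brewbaker's formula are legitimate — that is, genuine positive integers — which is immediate from $n \geq \max\{e_{j}\;|\;1\leq j \leq l\} \geq 1$ together with $0 \leq i \leq \varphi(M)-1$ and $k \geq 1$.
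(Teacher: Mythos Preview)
Your proposal is correct and matches the paper's approach exactly: the paper states that the theorem follows immediately from Corollary \ref{cor3.7} via Brewbaker's relation (\ref{Brew}), and your write-up simply spells out this translation in detail. The extra sentence verifying that the arguments fed into (\ref{Brew}) are positive integers is a harmless bit of bookkeeping that the paper omits.
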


\section*{Acknowledgements}
This work is supported by Japan Society for the Promotion of Science, 
Grant-in-Aid for Scientific Research (C) 15K04774, 19K03437, 23K03026 (Y.O.).

\ 

\begin{flushleft}
\begin{small}
{Yasuo~Ohno}: 
{Mathematical Institute, Tohoku University, 
Aramaki Aza-Aoba 6-3, Aoba-ku, Sendai 980-8578, Japan}

e-mail: {\tt ohno.y@tohoku.ac.jp}

\ 

{Mika~Sakata}: 
{Department of Sport Sciences, School of Sport Sciences, 
Osaka University of Health and Sport Sciences, 
Asashirodai 1-1, Kumatori-cho, Sennan-gun, Osaka 590-0496, Japan}

e-mail: {\tt m.sakata@ouhs.ac.jp}
\end{small}
\end{flushleft}

\end{document}